%
\documentclass[12pt]{amsart}
\usepackage{amssymb,enumerate}
\usepackage{graphicx}
\newcommand{\subfloat}{}




\newcommand{\X}{{\mathbf{R}^d}}

\newcommand{\Y}{{\mathbf{R}^d}}

\newcommand{\R}{{\mathbf R}}

\newcommand{\Leb}{{\mathcal{L}}}







\theoremstyle{plain}

\newtheorem{thm}{Theorem}[section]
\newtheorem{cor}[thm]{Corollary}
\newtheorem{lem}[thm]{Lemma}
\newtheorem{problem}[thm]{Problem}
\newtheorem{propn}[thm]{Proposition}


\begin{document}

\title [Capacity constrained optimal transport]{Insights into
capacity constrained optimal transport}
\date{\today}

\author{Jonathan Korman}
\email{jkorman@math.toronto.edu}

\author{Robert J. McCann}
\thanks{The second author is pleased to acknowledge the support of Natural Sciences and Engineering Research Council of Canada Grants 217006-08. \copyright 2012 by the authors.}
\email{mccann@math.toronto.edu}
\address{Department of Mathematics, University of Toronto, Toronto Ontario M5S 2E4 Canada}



\begin{abstract}
A variant of the classical optimal transportation problem is:
among all joint measures with fixed marginals and which are dominated
by a given density, find the optimal one. Existence and uniqueness
of solutions to this variant were established in~\cite{KM11}.
In the present manuscript, we expose an unexpected symmetry leading to the first explicit
examples in two and more dimensions.  These are inspired in part by simulations in one
dimension which display singularities and topology and in part by two further developments:
the identification of all extreme points in the feasible set,  and a new approach
to uniqueness based on constructing feasible perturbations.
\end{abstract}

\maketitle

\setcounter{secnumdepth}{2}

\section{Introduction}


Given fixed distributions of supply and demand,
the optimal transportation problem of Monge \cite{Monge81} and
Kantorovich \cite{Kantorovich42} involves pairing supply with demand
so as to minimize the average transportation cost $c(x,y)$ between each
supplier $x$ and the demander $y$ with whom $x$ is paired.
For continuous distributions, 
this question forms an (the?)
archetypal example of an infinite-dimensional linear program. 
Its relevance to the physics of fluids has been recognized since the work of 
Brenier \cite{B87} and Cullen and Purser \cite{CP89},
while some of its applications to geometry,  dynamics,  partial differential equations,
economics and statistics 
are described in \cite{MG} \cite{RachevRueschendorf} 
\cite{Villani2} and the references there.
It is desirable to introduce congestion effects into this model, as can be attempted
in various ways \cite{CJS08};  one of the crudest is simply to bound the number of 
suppliers at $x$ who can be paired with demanders at $y$,  for each $x$ and $y$.  Despite
its appeal, for continuous distributions of supply and demand, this variant seems not
to have been studied until \cite{KM11}.  

As in all linear programs,  if the problem has solutions,  at least one of them will be 
an extreme point of the feasible set.  A remaining challenge in the Monge-Kantorovich 
transportation problem  
is to arrive at a characterization of the extreme points
which yields useful information about the geometry and topology of its solutions \cite{AhmadKimMcCann11}.
Somewhat surprisingly, such a characterization is much more accessible in our capacity constrained variant;
as shown below, it can basically be reduced to a `bang-bang' 
(all or nothing) principle. As a corollary, this characterization implies the uniqueness of solutions 
first established in \cite{KM11}.  Moreover,  it combines with elementary but obscure symmetries
to yield the first explicitly soluble examples in more than one dimension,  and with numerical and theoretical
considerations to give insights into the geometry and topology aspects of
basic examples which --- even in one-dimension --- still defy explicit solution.  

The problem in question is formulated precisely as follows:
Given densities $0 \leq f, g \in L^1_{} (\R^d)$ with same total mass
$\int f = \int g$, let  $\Gamma(f, g)$ denote the set of joint densities
$0 \leq h \in L^1_{}( \X \times \Y)$ which have $f$ and $g$ as their marginals,
meaning
\begin{eqnarray*}
f(x) = &\displaystyle \int_{\Y} h(x,\tilde y)d\tilde y & \\
&\displaystyle \int_{\X} h(\tilde x,y)d\tilde x &= g(y)
\end{eqnarray*}
for Lebesgue almost all $x, y \in \R^d$.
A bounded function $c(x,y)$ represents the cost per unit mass for transporting material
from $x\in\X$ to $y\in\Y$. 
The (total) transportation cost of $h$ is denoted $c[h]$, defined by
\begin{eqnarray}\label{transportation_cost}
c[h]:=\int_{\X \times \Y} c(x,y)h(x,y)dx dy,
\end{eqnarray}
is proportional to the expected value of $c$ with respect to $h$.

Given $0 \leq \bar h\in L^1(\X \times \Y)$, 
we let $\Gamma(f, g)^{\bar h}$ denote the set of all $h\in\Gamma(f, g)$ dominated by $\bar h$, that is $h \leq \bar h$ almost everywhere.
The optimization problem we are concerned with --- {\it optimal transport with capacity constraints}---is to minimize the transportation cost (\ref{transportation_cost})
among joint densities $h$ in $\Gamma(f, g)^{\bar h}$, to obtain the optimal cost
\begin{eqnarray}\label{eqn: constraind_optimal_cost}
\underset{h\in\Gamma(f, g)^{\bar h}}{\min} c[h]
\end{eqnarray}
under the capacity constraint $\bar h$.

\smallskip

Notice this problem involves a linear minimization on a convex set
$\Gamma = \Gamma(f, g)^{\bar h}$, and therefore takes the form
of an infinite-dimensional linear program.
When $\Gamma$ is non-empty,
it is not hard to show that the minimum is attained, 
and that at least one of the minimizers $h$ is an extreme point of $\Gamma$,
meaning $h$ is not 
the midpoint of any segment in $\Gamma$.
Since the possible facet structure of $\Gamma$ is not obvious,
it is harder to determine whether or not this minimizer is unique.
A sufficient condition for uniqueness was discovered
in \cite{KM11}, and is recalled below.  It is even harder to envision what the solutions
will look like.  Perhaps the simplest example involves pairing Lebesgue
measure on the unit interval with itself so as to minimize
the quadratic transportation cost $c(x,y)=|x-y|^2$
(well-known to be equivalent to $c(x,y)=-x \cdot y$, as in the
 unconstrained problem \cite{B87}).
As the capacity
constraint $\bar h$ is varied over different constant values,
the numerical solutions below display an unexpected variety of strange
topological features and analytic singularities begging to be understood.
Even though supply equals demand in these examples,  and the constraints permit
at least some of the demand to be supplied locally at zero cost,
the islands of blue in these diagrams show that global optimality
may require there to be regions where none of the demand is supplied locally.

In this paper we establish symmetries which explain at least some of the observed
structures (Lemma \ref{lem:symmetries}).
These also lead to the first explicit examples of optimizers
in higher dimensions (Proposition \ref{P:examples}).  We precede this with a simple
description of the extreme points of $\Gamma$,  based on a perturbation argument.
The same idea is also used to substantially simplify the uniqueness argument
from~\cite{KM11}. The original argument relied on understanding the infinitesimal behaviour
of an optimizer near its Lebesgue points in order to argue that any optimizer is
geometrically extreme---a property which characterizes extreme points of
$\Gamma(f, g)^{\bar h}$ (see proposition~\ref{P:all or nothing}).
The new proof begins from an `all or nothing' characterization
of extreme points
and uses perturbations to argue directly --- without asymptotics or blow ups ---
that every optimizer is extreme. Uniqueness follows easily.\\

\noindent {\bf Acknowledgements}. We would like to thank Brian Wetton for sharing
the figures and the MATLAB code that generated them with us; these simulations
inspired Lemma~\ref{lem:symmetries}.

\section{Assumptions}\label{section:notation}

We make the following assumptions throughout (see~\cite{KM11} for more details).

\subsection{Assumptions on the cost}\label{assumptions:cost}
\begin{itemize}
\item[(C1)] $c(x,y)$ is bounded,
\item[(C2)] there is a Lebesgue negligible closed set $Z \subset \X \times \Y$ such that $c(x,y)\in C^2(\X \times \Y \setminus Z)$ and,
\item[(C3)] $c(x,y)$ is non-degenerate: $\det [D^2_{x^iy^j}c(x,y)]\neq 0$ for all $(x,y)\in \X \times \Y \setminus Z$.
\end{itemize}

\subsection{Assumption on the capacity constraint}\label{assumptions:capacity} $\bar h$ is non-negative and
Lebesgue integrable:  $0 \le \bar h \in L^1(\X \times \Y)$.

Given marginal densities $0\leq f, g\in L^1_{}(\X)$ with same total mass,
to avoid talking about the trivial case, we will always assume that a
feasible solution exists: $\Gamma=\Gamma(f, g)^{\bar h}\neq \emptyset$.

\section{Uniqueness: every optimizer is extreme}

Our arguments are based crucially on a preparatory lemma from real analysis.

\begin{lem}[Marginal-preserving volume exchange]\label{L:exchange}
Fix $\Delta = (1,\ldots,1) \in \R^d$.  If a Lebesgue set $U \subset \R^d \times \R^d$
is non-negligible,  then for all $\delta>0$ sufficiently small there is a subset
$V \subset U$ of positive volume such that
$(x+\delta\Delta,y),(x,y+\delta\Delta)$ and $(x+\delta\Delta,y+\delta\Delta)$ all
belong to $U$ whenever $(x,y) \in V$.  Moreover, $V$ may be taken to lie in the 
interior of a
coordinate hypercube of side-length $\delta$. The vertex of this hypercube at
which $\Delta$ is an outward normal may be chosen to lie at any Lebesgue point $z_0$
where $U$ has full density.
($V$ may be chosen to have Lebesgue density $1/2^{2d}$ at $z_0$.)
\end{lem}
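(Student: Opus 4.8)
The plan is to reduce the statement to the Lebesgue density theorem together with one elementary volume estimate. Since $U$ is non-negligible, the Lebesgue density theorem furnishes a point $z_0$ of density one for $U$ (and the conclusion is wanted at an arbitrary such point); after a translation we may assume $z_0 = 0$. Set $v_0 := 0$ and
\[
v_1 := (\delta\Delta, 0), \qquad v_2 := (0,\delta\Delta), \qquad v_3 := (\delta\Delta,\delta\Delta)\ \in\ \R^{d}\times\R^{d},
\]
so the assertion to be proved is exactly: $(x,y) + v_i \in U$ for $i = 0,1,2,3$ whenever $(x,y) \in V$. Let $Q := [-\delta,0]^{d}\times[-\delta,0]^{d}$, the coordinate cube of side $\delta$ having $z_0 = 0$ as the vertex distinguished in the lemma. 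The obvious candidate is
\[
V := Q \cap \bigcap_{i=0}^{3}\bigl(U - v_i\bigr),
\]
whose $i=0$ factor places it in $U \cap Q$, and for which membership in $U - v_i$ is precisely the required condition $(x,y) + v_i \in U$; so only the lower bound $|V| > 0$ needs an argument.

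For that bound, the point is that the four translates $Q + v_0,\ldots,Q + v_3$ are pairwise disjoint up to null sets and all lie inside the doubled cube $\widehat Q := [-\delta,\delta]^{d}\times[-\delta,\delta]^{d}$. Hence
\[
|Q\setminus V| \ \le\ \sum_{i=0}^{3}\bigl|Q\setminus(U-v_i)\bigr| \ =\ \sum_{i=0}^{3}\bigl|(Q+v_i)\setminus U\bigr| \ =\ \Bigl|\,\bigl(\textstyle\bigcup_{i=0}^{3}(Q+v_i)\bigr)\setminus U\,\Bigr| \ \le\ \bigl|\widehat Q\setminus U\bigr|.
\]
Because $z_0 = 0$ has density one for $U$ and $\widehat Q$ is a cube centred at $0$, for every $\varepsilon > 0$ we have $|\widehat Q\setminus U|\le\varepsilon\,|\widehat Q| = \varepsilon\,2^{2d}\delta^{2d}$ once $\delta$ is small; taking $\varepsilon < 2^{-2d}$ gives $|V|\ge |Q| - |\widehat Q\setminus U|\ge (1 - 2^{2d}\varepsilon)\,\delta^{2d} > 0$, and in fact $|V|/|Q|\to 1$ as $\delta\to 0$. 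To place $V$ in the \emph{interior} of $Q$ I would intersect it with the concentric open cube of side $(1-2\rho)\delta$, discarding only a boundary shell of volume $O(\rho)\,\delta^{2d}$; the positive-volume conclusion then survives for $\rho$ (and $\varepsilon$) small. This settles the main statement.

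The delicate part is the parenthetical claim that $V$ can be chosen with Lebesgue density exactly $2^{-2d}$ at $z_0$, i.e.\ with $Q\setminus V$ of density $0$ at $z_0$ relative to $Q$. The $V$ above need not have this property, because the condition $(x,y)+v_i\in U$ pulls back to a density statement for $U$ at the \emph{far} vertex $z_0 + v_i$, which is not a priori a density point of $U$. My proposed remedy is to be more selective about $z_0$: since $U - v_i \to U$ in $L^1_{\mathrm{loc}}$ as $\delta\to 0$, writing $U^{\ast}\subseteq U$ for the full-measure set of density points of $U$, one has $\bigl|\bigcap_{i=0}^{3}(U^{\ast} - v_i)\bigr|\to |U| > 0$; so once $\delta$ is small one can take $z_0$ which is a density point of $U$ \emph{and} for which $z_0 + v_1, z_0 + v_2, z_0 + v_3$ are density points of $U$. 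For such a $z_0$ each set $Q\cap(U-v_i)$ has density $2^{-2d}$ at $z_0$ (density one relative to $Q$), hence so does their intersection $V$. I expect the genuine obstacle to lie exactly here --- reconciling the single prescribed offset $\delta$ with density precisely $2^{-2d}$ at a vertex one would like to leave unrestricted --- so the write-up should fix $\delta$ small first, then choose $z_0$ from the good set above, and only then fix the cube $Q$.
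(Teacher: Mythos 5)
Your proof of the main claim is essentially the paper's own argument: pick a density point $z_0$, take the orthant cube $Q$ of side $\delta$ with vertex at $z_0$, observe that the four translated cubes $Q+v_i$ are disjoint up to null sets and fit inside the doubled cube around $z_0$, and conclude by a union bound that $Q\setminus V$ has measure strictly less than $|Q|$ once the doubled cube is mostly filled by $U$. The paper expresses the same reasoning after dilating by $\delta^{-1}$ (so the four cubes become $H_{ij}=(2i-1)I^d\times(2j-1)I^d$ and the bound ``each fraction $<1/4$'' replaces your $\varepsilon<2^{-2d}$), but the bookkeeping is the same and both are correct.

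Where you genuinely add something is the parenthetical claim. You have correctly put your finger on the subtlety: membership $(x,y)+v_i\in U$ for $i\geq 1$ controls the density of $V$ relative to $Q$ at $z_0$ only through the density of $U$ at the three \emph{translated} vertices $z_0+v_i$, and those points need not be Lebesgue density points of $U$ even if $z_0$ is. The paper's justification --- that $|Q\setminus V|/|Q|\to 0$ as $\delta\to 0$ --- shows the \emph{proportion} of the whole cube occupied by $V$ tends to $1$, but that is a statement about a family $V_\delta$ as $\delta\to 0$, not about the pointwise Lebesgue density of a fixed $V_\delta$ at the vertex $z_0$; indeed one can remove from $U$ a tiny ball near $z_0+v_1$ (for a particular $\delta$) without disturbing the density of $U$ at $z_0$, and then $V_\delta$ has density $0$ at $z_0$. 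Your remedy --- fix $\delta$ first and then, using $U-v_i\to U$ in $L^1_{\mathrm{loc}}$, select $z_0$ so that $z_0,z_0+v_1,z_0+v_2,z_0+v_3$ are all density points --- is sound, but note that it reverses the order of quantifiers relative to the paper's phrasing (which offers a free choice of $z_0$ first, then ``all $\delta$ small''). Since the parenthetical is never invoked in the later proofs (Proposition~\ref{P:all or nothing} and Theorem~\ref{T:optimizers are extreme} only use positivity of $|V|$), this is an observation rather than a gap in the paper's results, but your more careful treatment of it is warranted.
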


\begin{proof}
Let $I=[0,1]$ denote the unit interval and $I^d$ the hypercube, so
that $H_{ij} = (2i-1) I^d \times (2j-1)I^d$ for $i,j \in \{0,1\}$
define four hypercubes with disjoint interiors in $2d$ dimensions.
Let $z_0$ be a Lebesgue point where $U$ has full density;
we may suppose $z_0$ is the origin without loss of generality.  Letting $\delta^{-1}U$
denote the dilation of $U$ around $z_0$ by factor $\delta^{-1}$, we see
the fraction of $H_{ij}$ outside of $\delta^{-1}U$ tends to zero as $\delta \to 0$.
For $\delta$ sufficiently small, we may assume all four of these fractions to
be strictly less than $1/4$.  Let $V \subset H_{00}$ be the set of
$(x,y) \in H_{00} \cap \delta^{-1} U$
for which $(x+\Delta,y)$, $(x,y+\Delta)$ and $(x+\Delta,y+\Delta)$ also belong to $\delta^{-1}U$.
If $(x,y) \in H_{00} \setminus V$,  it is because at least one of the four points above
does not belong to $\delta^{-1}U$. Thus
$H_{00} \setminus V = J_{00} \cup J_{01} \cup J_{10} \cup J_{11}$
where each of the four sets
$J_{ij} + (i\Delta,j\Delta) := H_{ij} \setminus \delta^{-1} U$
has volume strictly less than $1/4$.  Thus $\Leb^{2d}[H_{00} \setminus V]<1$,
implying $V$ is a set of positive measure.  Discarding from $V$ any points on the boundary of $H_{00}$
and contracting by a factor $\delta$ yields the lemma.
(The parenthetical remark is obtained by noting $1/4$ is arbitrary in the argument above;
 taking $\delta$ smaller forces the volume of $H_{00} \setminus V$ to be as small as we
 please.  Thus $V$ fills a larger and larger fraction of the hypercube $H_{00}$
 near its vertex, where both $H_{00}$ and hence $V$ have Lebesgue density $1/2^{2d}$.)
\end{proof}

This allows us to give a much nicer characterization of the extreme points
of $\Gamma(f,g)^{\bar h}$ than any 
available for the
unconstrained problem ($\bar h=+\infty$) \cite{AhmadKimMcCann11}.

\begin{propn}[All or nothing characterization of extreme points]
\label{P:all or nothing}
Let $\Gamma=\Gamma(f,g)^{\bar h}$ denote the set of joint densities bounded by
$\bar h \in L^1(\R^n \times \R^n)$ and with marginals $f,g \in L^1(\R^n)$.
A density $h \in \Gamma$ is an extreme point of $\Gamma$
if and only if $h = 1_W \bar h$ for some Lebesgue measurable set
$W \subset \R^d \times \R^d$.
\end{propn}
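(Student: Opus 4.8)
The statement has the form of a bang-bang principle, and the plan is to prove the two implications separately; the forward implication ``extreme $\Rightarrow$ bang-bang'' is the substantial one, and I would establish its contrapositive by producing a marginal-preserving perturbation with the aid of Lemma~\ref{L:exchange}.

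\emph{Sufficiency.} Suppose $h = 1_W\bar h$ and $h = \tfrac12(h_0+h_1)$ with $h_0,h_1\in\Gamma$; I claim $h_0=h_1$, which makes $h$ extreme. Almost every point $z$ satisfies either $h(z)=\bar h(z)$ or $h(z)=0$. At a point of the first kind, $\bar h(z)=\tfrac12(h_0(z)+h_1(z))$ together with $0\le h_i(z)\le\bar h(z)$ forces $h_0(z)=h_1(z)=\bar h(z)$; at a point of the second kind, nonnegativity of $h_0,h_1$ forces $h_0(z)=h_1(z)=0$. No analysis is needed here.

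\emph{Necessity.} Assume $h\in\Gamma$ is \emph{not} of the form $1_W\bar h$; equivalently (taking $W=\{h=\bar h\}$ as the would-be witness), the set $U:=\{(x,y):0<h(x,y)<\bar h(x,y)\}$ is non-negligible. I would then exhibit $\rho\not\equiv 0$ with $h\pm\rho\in\Gamma$, so that $h=\tfrac12\bigl((h+\rho)+(h-\rho)\bigr)$ shows $h$ is not extreme. First, since $h>0$ and $\bar h-h>0$ a.e.\ on $U$, some $\eta>0$ makes $U':=U\cap\{h\ge\eta\}\cap\{\bar h-h\ge\eta\}$ still non-negligible (a non-negligible set is a countable union of such sets as $\eta\downarrow 0$). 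Applying Lemma~\ref{L:exchange} to $U'$ with $\Delta=(1,\dots,1)$, one obtains, for a suitable $\delta>0$, a positive-volume $V\subset U'$ lying in the interior of a coordinate hypercube of side $\delta$ and such that $(x+\delta\Delta,y),(x,y+\delta\Delta),(x+\delta\Delta,y+\delta\Delta)\in U'$ for every $(x,y)\in V$. With $0<\epsilon\le\eta$ I would set
\[
\rho(x,y):=\epsilon\Bigl[1_V(x,y)+1_V(x-\delta\Delta,y-\delta\Delta)-1_V(x-\delta\Delta,y)-1_V(x,y-\delta\Delta)\Bigr].
\]
Because $V$ fits inside a hypercube of side $\delta$, the four translates $V$, $V+(\delta\Delta,0)$, $V+(0,\delta\Delta)$, $V+(\delta\Delta,\delta\Delta)$ are pairwise disjoint, so $\rho$ takes only the values $0,\pm\epsilon$ and $\operatorname{supp}\rho\subset U'$; hence $\eta\le h$ and $\eta\le\bar h-h$ on $\operatorname{supp}\rho$, giving $0\le h\pm\rho\le\bar h$ everywhere. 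Translation invariance of Lebesgue measure makes the $y$-integrals of the first and last bracketed terms cancel and likewise those of the middle two, so $\int\rho(x,y)\,dy=0$ for a.e.\ $x$, and symmetrically $\int\rho(x,y)\,dx=0$; thus $h\pm\rho$ share the marginals $f,g$ of $h$ and lie in $\Gamma$. Since $\rho\not\equiv 0$, the argument is complete.

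\emph{Main obstacle.} Everything routine sits in the verification; the real point is to choose a perturbation that simultaneously (i) has vanishing marginals and (ii) respects $0\le h\pm\rho\le\bar h$. The ``$2\times 2$'' alternating sign pattern is dictated by (i), and Lemma~\ref{L:exchange} is exactly the device that places such a configuration inside $U$ on a set of positive measure; the only extra maneuver is shrinking $U$ to $U'$ so that one fixed $\epsilon$ handles (ii). I expect no difficulty beyond this bookkeeping once the exchange lemma is in hand.
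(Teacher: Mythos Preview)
Your proof is correct and follows essentially the same approach as the paper's: both directions match, and in the contrapositive direction you invoke Lemma~\ref{L:exchange} on the thresholded set $U'$ (the paper's $U_\epsilon$) to place the same alternating $2\times2$ perturbation inside it, then verify feasibility and vanishing marginals. The only cosmetic difference is that you separate the threshold parameter $\eta$ from the perturbation amplitude $\epsilon$ and argue disjointness of the four translates directly from the side-length-$\delta$ hypercube, whereas the paper uses $\epsilon$ for both roles and appeals to the four subcubes $H_{ij}$.
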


\begin{proof}
Recall $h \in \Gamma$ implies $0 \le h \le \bar h$.
If $h$ is extremal, we claim these inequalities cannot both be strict
on any subset $U\subset \R^{2d}$ of positive volume.  To show the contrapositive,
suppose such a $U$ existed.  Then for some $\epsilon>0$
the set $U_\epsilon = \{z \in \R^{2d} \mid \epsilon< h< \bar h-\epsilon\}$
would also have positive volume.  Lemma \ref{L:exchange} provides $\delta>0$
and $V \subset U_\epsilon$ of positive measure such that all four points
$(x \pm \frac{\delta}{2} \Delta, y\pm \frac{\delta}{2} \Delta)$ and
$(x \pm \frac{\delta}{2} \Delta, y\mp \frac{\delta}{2} \Delta)$ lie in $U_\epsilon$
whenever $(x -\frac{\delta}{2} \Delta, y-\frac{\delta}{2} \Delta) \in V$.  Setting
$I=[0,1]$ and using $i,j \in \{0,1\}$ to define four coordinate
hypercubes $H_{ij}=(2i-1)I^d \times (2j-1)I^d$,
after translation we may also assume $V \subset H_{00} \setminus \partial H_{00}$.
Then
\begin{equation}\label{neutral perturbation}
\tilde h(x,y) := \left\{
\begin{array}{ll}
+1 & {\rm if}\ (x,y) \in V\ {\rm or}\ (x-\delta\Delta,y-\delta\Delta) \in V\cr
-1 & {\rm if}\ (x-\delta\Delta,y) \in V\ {\rm or}\ (x,y-\delta\Delta) \in V \cr
 0 & {\rm otherwise}
\end{array}\right.
\end{equation}
is well-defined.  Notice that $\tilde h$ is constructed using symmetries which ensure
its integrals with respect to $x$ and with respect to $y$ both vanish, the other variable
being held fixed.  In other words,  the marginals of $\tilde h$ vanish.
Also, $\tilde h$ is supported in $U_\epsilon$, where we have room to add or subtract
$\epsilon$ from $h \in (\epsilon,\bar h-\epsilon )$.
Thus $h_\pm := h \pm \epsilon \tilde h$ both belong to $\Gamma$; they are distinct
since $V$ has positive volume.  Expressing
$h= \frac{1}{2} (h_+ + h_-)$ as a convex combination of $h_\pm$
shows $h$ is not an extreme point of $\Gamma$.

Conversely, we claim any $h=1_W \bar h$ with $W \subset \R^{2d}$ Lebesgue is extreme.
To see this, suppose $h=1_W \bar h$ could be decomposed as a convex combination
$h=\frac{1}{2}(h_+ + h_-)$ of $h_\pm \in \Gamma$.
Since $h_\pm$ are both non-negative,  they must both vanish where $h$ does;
thus $h_\pm =0$ outside of $W$.  Since both $h_\pm \le \bar h$,  they must
both coincide with $\bar h$ where $h$ does;  thus $h_\pm = \bar h$ in $W$.
This shows $h_+ =h_-$, establishes extremality of $h=1_W \bar h$,
and completes the proof of the proposition.
\end{proof}

More importantly, it allows us to construct a perturbative argument
for uniqueness,  much simpler than the original proof of \cite{KM11}.

\begin{thm}[Every optimizer is extreme]\label{T:optimizers are extreme}
Let the cost $c(x,y)$ satisfy conditions $(C1)-(C3)$, 
fix $0 \leq \bar h\in L^1 (\X \times \Y)$ and take
$0 \leq f, g \in L^1_{}(\X)$ such that
$\Gamma:=\Gamma(f,g)^{\bar h}\neq \emptyset$.
If $h\in\Gamma$ is optimal, i.e. $ h \in \mbox{\rm argmin}_{k\in\Gamma} c[k]$, then $h$ is
an extreme point of $\Gamma$.
\end{thm}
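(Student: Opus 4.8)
The plan is to prove the contrapositive: assuming $h\in\Gamma$ is \emph{not} extreme, we construct a feasible competitor with strictly smaller cost, contradicting optimality. By Proposition~\ref{P:all or nothing}, if $h$ is not extreme then the inequalities $0\le h\le\bar h$ are both strict on a set $U$ of positive volume; passing to a slightly shrunken set $U_\epsilon=\{\epsilon<h<\bar h-\epsilon\}$, we retain positive volume. The idea is that on such a set we have room to push mass around in both directions, and the non-degeneracy hypothesis (C3) on the cost will let us do so at a strict net savings.

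The key device is the marginal-killing perturbation $\tilde h$ from the proof of Proposition~\ref{P:all or nothing}: given the hypercube configuration supplied by Lemma~\ref{L:exchange}, $\tilde h$ takes values $+1$ on $V$ and on its diagonal translate $V+(\delta\Delta,\delta\Delta)$, and $-1$ on the two off-diagonal translates, with vanishing $x$- and $y$-marginals. Then $h_\pm:=h\pm\epsilon\tilde h\in\Gamma$ for all small $\epsilon$, and optimality of $h$ forces $c[\tilde h]=0$, i.e.
\begin{equation*}
\int_V \Big[c(x,y)+c(x+\delta\Delta,y+\delta\Delta)-c(x+\delta\Delta,y)-c(x,y+\delta\Delta)\Big]\,dx\,dy = 0.
\end{equation*}
The next step is to extract a pointwise consequence. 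Since $V$ could have been centered (by the last clause of Lemma~\ref{L:exchange}) at any Lebesgue density point $z_0=(x_0,y_0)$ of $U_\epsilon$, and since $V$ fills the corner of the hypercube with density $1/2^{2d}$, a Lebesgue-differentiation argument — dividing by $|V|$ and letting $\delta\to0$ — shows that the mixed second difference of $c$ must vanish in an averaged sense at $z_0$. Away from the negligible bad set $Z$ of (C2), the mixed second difference scales like $\delta^d\cdot\delta^d$ times $\sum_{i,j}\partial^2 c/\partial x^i\partial y^j$ evaluated near $z_0$ (since $\Delta=(1,\dots,1)$, the relevant quantity is $\Delta^{\!\top} D^2_{xy}c\,\Delta$, the sum of all entries of the mixed Hessian). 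Thus we would conclude $\sum_{i,j}\partial^2_{x^iy^j}c(z_0)=0$ at a.e.\ $z_0\in U_\epsilon\setminus Z$.

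This single scalar relation is weaker than $\det D^2_{xy}c\ne0$, so one more idea is needed: one should exploit the \emph{freedom in the direction} $\Delta$. The perturbation in Lemma~\ref{L:exchange} is stated for $\Delta=(1,\dots,1)$, but the same construction — shifting the $x$ and $y$ blocks along a common lattice direction — works with $\Delta$ replaced by any integer vector, or more flexibly one can use independent shift vectors $a$ in the $x$-variable and $b$ in the $y$-variable and a correspondingly adapted four-cube pattern, still producing a perturbation with vanishing marginals. Repeating the argument then gives $a^{\!\top} D^2_{xy}c(z_0)\,b=0$ for a.e.\ $z_0\in U_\epsilon\setminus Z$ and all $a,b$ in a spanning set of directions, hence $D^2_{xy}c(z_0)=0$ on a set of positive measure, contradicting non-degeneracy (C3). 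Therefore no such $U$ exists, $h$ is extreme by Proposition~\ref{P:all or nothing}, and the theorem follows. The main obstacle I anticipate is the rigorous passage from the integrated identity $c[\tilde h]=0$ to the pointwise vanishing of the mixed Hessian: this requires care with the two-scale structure (the $\delta$-hypercube location versus the $\delta$-dilation inside it), uniform control of the $C^2$ Taylor remainder of $c$ on compact subsets of the complement of $Z$, and checking that the density-$1/2^{2d}$ set $V$ is regular enough for Lebesgue differentiation to apply — but all of this is exactly what Lemma~\ref{L:exchange} was engineered to supply.
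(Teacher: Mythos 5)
Your proposal is correct in outline, but it takes a genuinely different route from the paper's proof, and the route you choose is somewhat more laborious. The paper avoids the difficulty you correctly identify (that $\sum_{i,j} D^2_{x^iy^j}c(z_0)=0$, i.e.\ $\Delta^{\!\top}D^2_{xy}c(z_0)\Delta=0$, is perfectly compatible with $\det D^2_{xy}c(z_0)\ne0$) by performing, up front, a linear change of the $y$-coordinate --- licensed by non-degeneracy (C3) --- so that $D^2_{x^iy^j}c(z_0)=-\delta_{ij}$. After this normalization the single direction $\Delta=(1,\dots,1)$ is automatically ``good'': $\sum_{i,j}D^2_{x^iy^j}c(z_0)=-d<0$, so the one perturbation $h+\epsilon\tilde h_\delta$ already has strictly lower cost once $\delta$ is small enough to keep the integrand negative by continuity. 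No limiting argument, no contradiction, no family of directions. You instead exploit the two-sided feasibility of $h\pm\epsilon\tilde h$ to force $c[\tilde h_\delta]=0$, then send $\delta\to0$ to extract a pointwise relation, notice the scalar relation is insufficient, and recover the full matrix identity $D^2_{xy}c(z_0)=0$ by varying independent shift vectors $a,b$ --- which, as you note, requires a minor (but routine) generalization of Lemma~\ref{L:exchange} from the single shift $\Delta$ to arbitrary pairs $(a,b)$. This works: running your argument for the $d^2$ pairs $(e_i,e_j)$ kills every entry of $D^2_{xy}c$ on a positive-measure subset of $U_\epsilon\setminus Z$, contradicting (C3). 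What the paper's approach buys is economy --- one direction, one sign computation, and Lemma~\ref{L:exchange} used exactly as stated --- at the price of invoking the coordinate-change device (cf.\ \cite{MPW}). What your approach buys is that it never touches the coordinates or the marginals, working entirely with perturbations and Lebesgue differentiation; the price is the limiting argument and the need to re-derive the exchange lemma for general shift pairs. Both establish the theorem.
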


\begin{proof}
Suppose $h \in \Gamma$ is not an extreme point of $\Gamma$.
We shall establish the theorem by constructing a perturbation of $h$
which decreases the cost $c[h]$.  Proposition \ref{P:all or nothing}
asserts $h \ne 1_W \bar h$,  meaning the set $U$ of Lebesgue points $z$
for $h$ and $\bar h$ at which $0<h<\bar h$ has positive volume.
Similarly, for sufficiently small $\epsilon>0$ the set
$U_\epsilon = \{ z\in U \setminus Z \mid \epsilon < h < \bar h -\epsilon\}$
also has positive volume, where $Z$ is the negligible closed set on which hypotheses
(C2) $c \in C^2$ and (C3) $\det [D^2_{x^i y^j}]\ne 0$ may fail.
Let $z_0=(x_0,y_0)$ be a point where
$U_\epsilon$ has full Lebesgue density;  we may assume $z_0$ to be the origin
without loss of generality.  After a linear transformation 
of the variable $y$ (as in~\cite{MPW} or \S 5 of \cite{KM11}),
we can also assume $D^2_{x^i y^j} c (z_0)= -\delta_{ij}$
without losing generality.
Set $I=[0,1]$ and
$H_{ij} = (2i-j)I^d \times (2j-1) I^d$ for $i,j \in \{0,1\}$.
Applying Lemma \ref{L:exchange} in the new coordinates 
yields $\delta>0$ and a set $V \subset  H_{00}$ of positive measure
such that $(x+i\delta \Delta,y+j\delta \Delta) \in U_\epsilon \cap H_{ij}$
for $i,j \in \{0,1\}$.
The perturbation $\tilde h_\delta =\tilde h$ of \eqref{neutral perturbation} is again well-defined,
and its marginals vanish.  Moreover,
$h_{\delta} = h + \epsilon \tilde h_\delta \in \Gamma$
is a feasible competitor since
$|\tilde h_\delta| \le 1_{U_\epsilon}$ and $h \in (\epsilon,\bar h-\epsilon )$ on $U_\epsilon$.
The change in cost produced by this perturbation is
\begin{eqnarray*}
&& c[h_{\delta}] - c[h] \cr
&=& \epsilon \int c(x,y) \tilde h_\delta (x,y) d^d x d^dy \cr
&=& \epsilon \int_V [c(x,y) + c(x+\delta \Delta,y+\delta \Delta) - c(x + \delta \Delta,y)
- c(x,y+ \delta \Delta) ] d^dx d^dy \cr
&=& \epsilon \delta^2 \int_V [  \int_0^1 \int_0^1
\sum_{i,j=1}^n D^2_{x^iy^j} c (x+s\delta \Delta,y+t\delta \Delta) ds dt ]d^dx d^dy.
\end{eqnarray*}
In this formula, the arguments $z$ of the continuous mixed partials $D^2_{x^i y^j} c$
all lie within distance $\delta \sqrt{2d}$ of a point $z_0$ at which
$\sum_{i,j} D^2_{x^iy^j}c(z_0) = -n$. Thus for $\delta$ small enough, the
perturbed cost $c[h_{\delta} - h]<0$ is 
negative, precluding optimality of $h$.
The contrapositive implies the only optimizers $h$ of $c$ are extreme points of $\Gamma$.
\end{proof}

\begin{cor}[Uniqueness of Optimizer]\label{cor:uniqueness}
Under the same hypotheses,  the minimum in Theorem \ref{T:optimizers are extreme}
is uniquely attained.
\end{cor}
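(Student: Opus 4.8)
The plan is to deduce uniqueness from Theorem~\ref{T:optimizers are extreme} together with the linearity of the functional $h \mapsto c[h]$ on the convex set $\Gamma$, by a standard midpoint argument. Suppose $h_0$ and $h_1$ are both optimal, and let $m = \min_{k\in\Gamma} c[k]$. Since $\Gamma$ is convex, the midpoint $h_{1/2} := \frac{1}{2}(h_0 + h_1)$ lies in $\Gamma$, and by linearity of the cost, $c[h_{1/2}] = \frac{1}{2}(c[h_0] + c[h_1]) = m$. Hence $h_{1/2}$ is itself an optimizer, so Theorem~\ref{T:optimizers are extreme} forces $h_{1/2}$ to be an extreme point of $\Gamma$.

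The key step is then to observe that an extreme point cannot be written as the midpoint of two distinct elements of $\Gamma$ --- that is essentially the definition of ``extreme'' recalled in the introduction (``$h$ is not the midpoint of any segment in $\Gamma$''). Since $h_{1/2} = \frac{1}{2}(h_0 + h_1)$ is such a midpoint, we must have $h_0 = h_1$ (as elements of $L^1$, i.e. Lebesgue-almost everywhere). This shows the minimizer is unique.

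I would include one preliminary line recording that a minimizer exists at all, so the statement is not vacuous: since $\Gamma \ne \emptyset$ by hypothesis, the minimum in \eqref{eqn: constraind_optimal_cost} is attained (as already noted in the introduction, weak-$*$ compactness of $\Gamma$ in $L^1$ dominated by the fixed $\bar h \in L^1$, together with linearity hence continuity of $c[\cdot]$ under the relevant convergence since $c$ is bounded, yields existence of an optimizer). Combining existence with the midpoint argument above gives that the minimum is uniquely attained.

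I do not anticipate a real obstacle here: the only thing to be slightly careful about is the sense in which uniqueness holds (Lebesgue-a.e.\ equality of densities, since elements of $\Gamma$ are only defined up to null sets), and the fact that ``extreme point'' as used in Proposition~\ref{P:all or nothing} and Theorem~\ref{T:optimizers are extreme} is precisely the midpoint-of-a-segment notion rather than some weaker variant --- both of which are already fixed by the conventions set up in the introduction. The argument is a two-line corollary of the theorem.
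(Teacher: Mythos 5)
Your proof is correct and follows exactly the same midpoint argument as the paper: average two optimizers, invoke Theorem~\ref{T:optimizers are extreme} to conclude the average is extreme, and then use the definition of extremality to force equality. The extra remarks on existence and the a.e.\ sense of uniqueness are sound but not needed beyond what the paper already records in the introduction.
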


\begin{proof}
If $h_0$ and $h_1$ both minimize $c[h]$ on $\Gamma$,
then so does $h_{1/2} = \frac{1}{2}(h_0+h_1)$ since
$c[\,\cdot\,]$ is linear and $\Gamma$ is convex.
Theorem \ref{T:optimizers are extreme} then asserts extremality of $h_{1/2}$ in $\Gamma$,
so $h_0=h_1$.  This is the desired uniqueness.
\end{proof}

\section{Simulations and symmetries}

In case $\bar h(x,y)= const$,  the problem has symmetries which limit the
possible solutions.  After introducing these symmetries,  we use them to
establish a new class of examples 
for which the optimal transport can be
displayed explicitly.  These include the two-by-two checkerboard
(Example 1.1 of~\cite{KM11} or Corollary \ref{cor:checker-board} below) as a particular case.\\

Figure~\ref{fig:hbar} shows a simulation of the optimal solutions with uniform marginals
for the distance squared cost on $I \times I$ with $\bar{h}=3$ and with
$\bar{h}=\frac{3}{2}$. Red represents the region $W$ where the $\bar{h}$ constraint is
saturated; in the complementary blue region, no transportation occurs.
These computer simulations were originally presented to us by Brian Wetton who remarked
on the symmetry manifested between the $\bar{h}=3$ case and the $\bar{h}=\frac{3}{2}$ case.
This symmetry is explained by the following lemma,  which applies to any pair of
H\"older conjugates $p$ and $q$.

\begin{figure}[h!]
\centering
\subfloat[$\bar{h}=3$]{
\label{fig:subfig:a}
\includegraphics[width=1.7in]{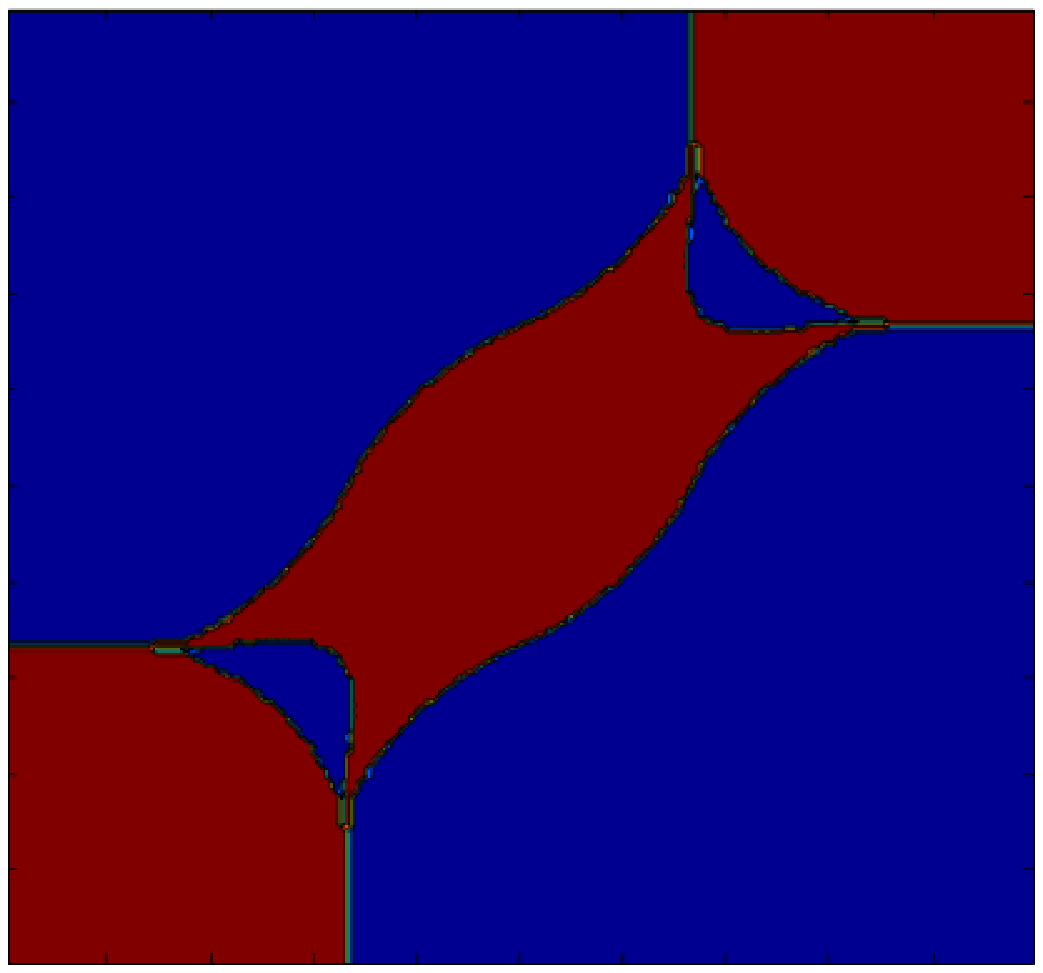}}
\hspace{0.1in}
\subfloat[$\bar{h}=\frac{3}{2}$]{
\label{fig:subfig:b}
\includegraphics[width=1.7in]{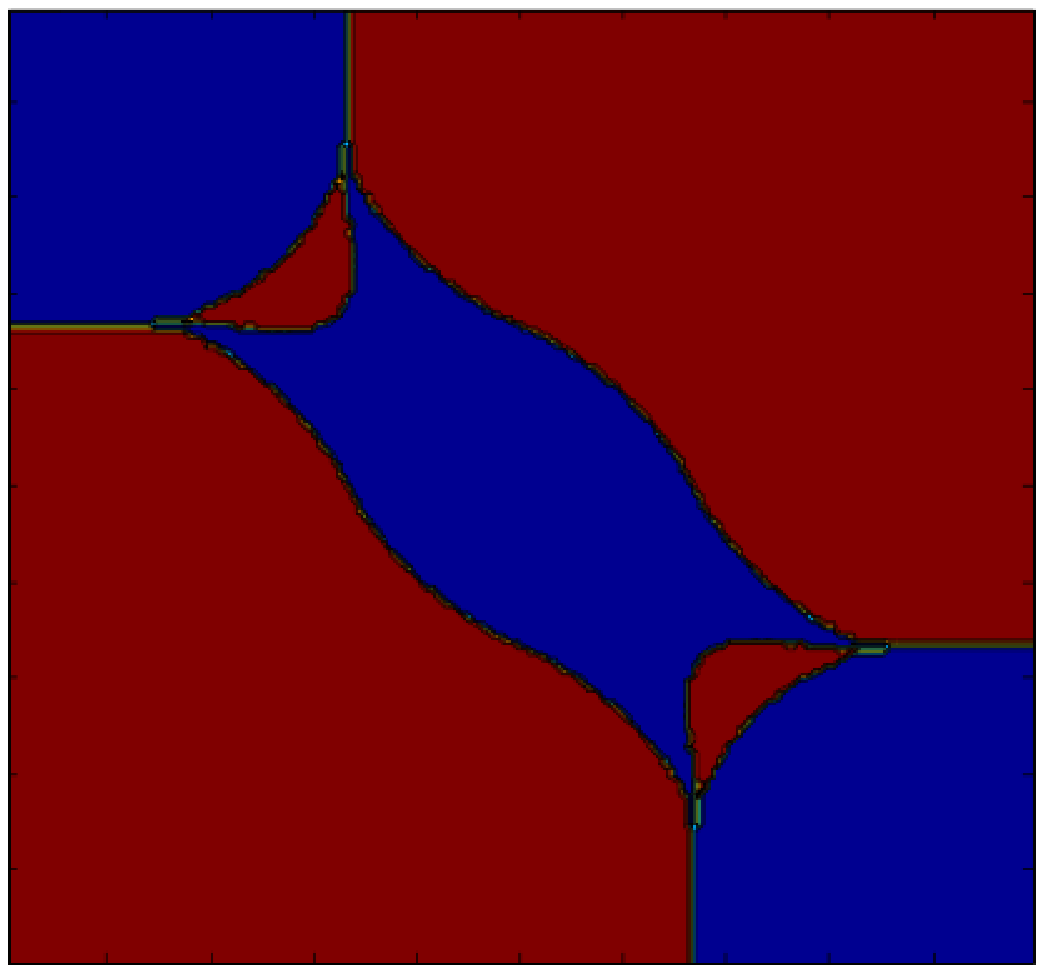}}
\caption{Red represents the saturation set $W$ given $\bar h$.
Note the symmetry between (a) $\bar h =3$ and (b) $\bar h= 3/2$.}
\label{fig:hbar}
\end{figure}

\begin{lem}[Symmetries]\label{lem:symmetries}
Let $\Omega,\Lambda \subset \R^d$ be bounded sets with unit volume.
Set $f=1_\Omega$ and $g=1_\Lambda$, 
where $1_\Omega$ denotes the indicator function of the set $\Omega$.
Let $\bar h=\bar h_p:=p  1_{\Omega \times \Lambda}$ have constant density
$p>1$ on the product $\Omega \times \Lambda$, and $c(x,y)=-x\cdot y$.  Given any
set $W \subset \Omega \times \Lambda$,  let $R(W)$ denote its image under the
reflection $R(x,y)=(x,-y)$,  and $\tilde W := (\Omega\times\Lambda) \setminus W$
its set theoretic complement.  If 
$p 1_W \in \Gamma^{\bar h_p}(1_\Omega,1_\Lambda)$ then 
$q 1_{\tilde W} \in \Gamma^{\bar h_q}(1_\Omega,1_\Lambda)$, where
$p^{-1} + q^{-1} =1$ are H\"older conjugates.
Moreover
$c[1_W]+b(\Omega)\cdot b(\Lambda)=-c[1_{\tilde W}]=c[1_{R(\tilde W)}]$,
where $b(\Omega) = \int_\Omega x$ is the center of mass of $\Omega$.
Thus 
$p 1_W$ minimizes $c$ on $\Gamma^{\bar
h_p}(1_\Omega,1_\Lambda)$ if and only if $q 1_{R(\tilde W)}$ minimizes $c$ on
$\Gamma^{q1_{R(\Omega \times \Lambda)}}(1_\Omega,1_{-\Lambda})$.
\end{lem}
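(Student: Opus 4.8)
The plan is to check in turn the three assertions of the lemma --- feasibility of $q1_{\tilde W}$, the two cost identities, and the equivalence of the optimality statements --- the first two by direct computation and the last by composing explicit affine bijections between the relevant feasible sets. For feasibility: since $p1_W\in\Gamma^{\bar h_p}(1_\Omega,1_\Lambda)$, the marginal constraints force, for a.e.\ $x\in\Omega$, that the slice $W_x:=\{y:(x,y)\in W\}$ has measure $1/p$, and similarly every slice $\{x:(x,y)\in W\}$ has measure $1/p$; since $|\Omega|=|\Lambda|=1$ and $\tilde W=(\Omega\times\Lambda)\setminus W$, the corresponding slices of $\tilde W$ have measure $1-1/p=1/q$, so integrating $q1_{\tilde W}$ over one variable returns $1_\Omega$, resp.\ $1_\Lambda$, while $q1_{\tilde W}\le q1_{\Omega\times\Lambda}=\bar h_q$ is clear. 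For the cost identities: $W$ and $\tilde W$ partition $\Omega\times\Lambda$, so by Fubini $c[1_W]+c[1_{\tilde W}]=-\int_{\Omega\times\Lambda}x\cdot y\,dx\,dy=-\,b(\Omega)\cdot b(\Lambda)$, and the change of variable $y\mapsto-y$ in $c[1_{R(\tilde W)}]=-\int_{R(\tilde W)}x\cdot y\,dx\,dy$ gives $+\int_{\tilde W}x\cdot y\,dx\,dy=-c[1_{\tilde W}]$.

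The crux is the equivalence of optimality. Here I would introduce the affine map $\Phi(k):=\frac{p}{q}(\bar h_q-k)$. A marginal computation as above (the prefactor $p/q=q-1$ renormalizes, and $\frac{p}{q}(\bar h_q-k)\le\frac{p}{q}\bar h_q=\bar h_p$) shows $\Phi$ sends $\Gamma^{\bar h_q}(1_\Omega,1_\Lambda)$ into $\Gamma^{\bar h_p}(1_\Omega,1_\Lambda)$; the analogous map in the reverse direction inverts it, so $\Phi$ is a bijection, with $\Phi(q1_{\tilde W})=p1_W$. Linearity of $c[\cdot]$ yields $c[\Phi(k)]=-\,p\,b(\Omega)\cdot b(\Lambda)-\frac{p}{q}\,c[k]$, so, since $p/q>0$, $\Phi$ interchanges minimizers and maximizers of $c$. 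Next, the reflection $\Psi(k)(x,y):=k(x,-y)$ is an affine bijection of $\Gamma^{\bar h_q}(1_\Omega,1_\Lambda)$ onto $\Gamma^{q1_{R(\Omega\times\Lambda)}}(1_\Omega,1_{-\Lambda})$ with $\Psi(q1_{\tilde W})=q1_{R(\tilde W)}$, and the substitution $y\mapsto-y$ gives $c[\Psi(k)]=-c[k]$, so $\Psi$ too swaps minimizers with maximizers. Composing, $\Psi\circ\Phi^{-1}$ is an affine bijection $\Gamma^{\bar h_p}(1_\Omega,1_\Lambda)\to\Gamma^{q1_{R(\Omega\times\Lambda)}}(1_\Omega,1_{-\Lambda})$ which, having reversed the sense of optimization twice, carries minimizers to minimizers and sends $p1_W\mapsto q1_{R(\tilde W)}$ --- precisely the asserted equivalence. (One could alternatively invoke Proposition~\ref{P:all or nothing} and Corollary~\ref{cor:uniqueness} to reduce a priori to competitors of the form $1_{W'}\bar h$, but the maps $\Phi,\Psi$ make this unnecessary.)

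The one genuinely error-prone point is the bookkeeping of signs and of the factor $p/q$: because $c(x,y)=-x\cdot y$ and the competitors carry a positive prefactor, ``minimize $c$'' really means ``maximize $\int x\cdot y$'', so passing to the complement $W\rightsquigarrow\tilde W$ covertly swaps minimization with maximization, and it is only the further reflection $R$ that swaps it back; one has to apply both --- and to confirm that $\Phi$ lands in the right feasible set with the right normalization --- in order to end up with a statement about minimizers rather than maximizers. Beyond that, the proof is just Fubini and changes of variable.
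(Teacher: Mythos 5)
Your proof is correct and follows essentially the same approach as the paper: the slice argument for feasibility of $q1_{\tilde W}$, Fubini for $c[1_W]+c[1_{\tilde W}]=-b(\Omega)\cdot b(\Lambda)$, and the change of variable $y\mapsto -y$ giving $c[1_{R(\tilde W)}]=-c[1_{\tilde W}]$ are exactly the paper's computations. Where you add value is in the final step: the paper simply says these identities ``imply the remaining assertions,'' whereas you make the optimality equivalence explicit by exhibiting the affine bijections $\Phi(k)=\frac{p}{q}(\bar h_q-k)$ and $\Psi(k)(x,y)=k(x,-y)$ between the relevant feasible sets and checking that their composition carries minimizers to minimizers; your remark that complementation $W\mapsto\tilde W$ alone reverses the sense of optimization, and that it is the reflection $R$ which restores it, is precisely the subtlety the paper's terse conclusion leaves to the reader.
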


\begin{proof}
For $W \subset \Omega \times \Lambda$ set
$W(x) = \{ y \in \R^d \mid (x,y) \in W\}$ and
$W^{-1}(y) = \{x \in \R^d \mid (x,y) \in W\}$.
Notice $p 1_W \in \Gamma^{\bar h_p}(1_\Omega,1_\Lambda)$ if and only if
$p|W(x)|=1$ and $p|W^{-1}(y)|=1$ for
a.e.~$(x,y)\in \Omega \times \Lambda$.
From $|W(x)| + |\tilde W(x)|=1$ we conclude
$|\tilde W(x)| = 1 - \frac{1}{p} = \frac{1}{q}$
for a.e. $x \in \Omega$,  and similarly
$|\tilde W^{-1}(y)|=\frac{1}{q}$.  Thus
$q 1_{\tilde W} \in \Gamma^{\bar h_q}(1_\Omega,1_\Lambda)$.


On the other hand,
\begin{eqnarray*}
c[1_W]+c[1_{\widetilde{W}}]
&=&\int_W c(x,y)+ \int_{\widetilde{W}} c(x,y)
\\ &=& \int_{\Omega \times \Lambda} -x\cdot y
\\ &=& - \int_\Omega x  \cdot \int_\Lambda y
\\ &=& - b(\Omega) \cdot b(\Lambda)
\end{eqnarray*}
and
\begin{eqnarray*}
c[1_{R(\widetilde{W})}]
&=& \int_{R(\widetilde{W})} c(x,y)
\\&=& \int_{\widetilde{W}} c(x,-y)
\\&=& - \int_{\widetilde{W}} c(x,y)
\\&=& - c[1_{\widetilde{W}}],
\end{eqnarray*}
which imply the remaining assertions.
\end{proof}

The next proposition shows these elementary symmetries yield a broad class of
examples in the self-dual case $p=2=q$.

\begin{propn}[Universal optimizer for a balanced set with self-dual constraint]
\label{P:examples}
Fix uniform densities $f=1_{\Omega}$ and $g=1_\Lambda$ on two
bounded Lebesgue sets $\Omega,\Lambda \subset \R^d$ of unit volume.
Let $\bar h=\bar h_2 := 2 \cdot 1_{\Omega \times \Lambda}$ have constant density $2$ on
$\Omega \times \Lambda$,
and fix $c(x,y)=-x \cdot y$. If $\Omega$ and $\Lambda$ are balanced,
meaning $\Omega=-\Omega$,
the minimizer $\bar h_2 1_W$ of $c$ on $\Gamma^{\bar h_2}(1_\Omega,1_\Lambda)$ satisfies
$W = R(\tilde W) = -R(\tilde W)$ (up to sets of measure zero), where $R(x,y) = (x,-y)$.
It follows that $W=\{ (x,y) \in \Omega \times \Lambda \mid x \cdot y >0\}$.
\end{propn}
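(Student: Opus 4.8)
The plan is to combine the uniqueness results established above with the self-dual instance $p=q=2$ of Lemma~\ref{lem:symmetries}, and then to identify the resulting symmetric set explicitly by an elementary optimality comparison. First I would note that $c(x,y)=-x\cdot y$ satisfies $(C1)$--$(C3)$ (with $Z=\emptyset$ and $\det[D^2_{x^iy^j}c]=(-1)^d$), so Theorem~\ref{T:optimizers are extreme}, Proposition~\ref{P:all or nothing} and Corollary~\ref{cor:uniqueness} apply: there is a unique minimizer of $c$ on $\Gamma^{\bar h_2}(1_\Omega,1_\Lambda)$, it is an extreme point, and hence it equals $\bar h_2 1_W=2\cdot 1_W$ for some Lebesgue set $W\subset\Omega\times\Lambda$, determined up to a null set.

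Second, I would extract the two reflection identities. Applying Lemma~\ref{lem:symmetries} with $p=q=2$ and using $\Lambda=-\Lambda$, we have $1_{-\Lambda}=1_\Lambda$ and $R(\Omega\times\Lambda)=\Omega\times(-\Lambda)=\Omega\times\Lambda$, so the concluding equivalence of that lemma reads: $2\cdot 1_W$ minimizes $c$ on $\Gamma^{\bar h_2}(1_\Omega,1_\Lambda)$ if and only if $2\cdot 1_{R(\tilde W)}$ minimizes $c$ on the \emph{same} feasible set. Since $R(\tilde W)\subset\Omega\times\Lambda$, Corollary~\ref{cor:uniqueness} forces $W=R(\tilde W)$ modulo a null set. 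Repeating the short computation from the proof of Lemma~\ref{lem:symmetries} with the reflection $S(x,y):=(-x,y)=-R(x,y)$ in place of $R$ --- where again $c(-x,y)=-c(x,y)$, feasibility is preserved because $\Omega=-\Omega$, and the constant $b(\Omega)\cdot b(\Lambda)$ drops out since $b(\Omega)=\int_\Omega x=0$ --- gives in the same way $W=S(\tilde W)=-R(\tilde W)$ modulo a null set. This proves the first assertion.

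Third, I would identify $W$. The relation $R(W)=\tilde W$ exhibits $\Omega\times\Lambda$ as the essentially disjoint union of $W$ and $R(W)$, and the measure-preserving change of variables $(x,y)\mapsto(x,-y)$ gives $\int_{R(W)}|x\cdot y|=\int_W|x\cdot y|$, so $\int_W|x\cdot y|=\tfrac12\int_{\Omega\times\Lambda}|x\cdot y|$ and therefore $c[2\cdot 1_W]=-2\int_W x\cdot y\ge -2\int_W|x\cdot y|=-\int_{\Omega\times\Lambda}|x\cdot y|$. Now set $W_0:=\{(x,y)\in\Omega\times\Lambda\mid x\cdot y>0\}$. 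It is feasible: for each nonzero $x\in\Omega$ the hyperplane $x^\perp$ cuts the centrally symmetric set $\Lambda$ into two pieces interchanged by $y\mapsto-y$, so $|W_0(x)|=\tfrac12$, and symmetrically $|W_0^{-1}(y)|=\tfrac12$ for a.e.\ $y\in\Lambda$, whence $2\cdot 1_{W_0}\in\Gamma^{\bar h_2}(1_\Omega,1_\Lambda)$. It attains the lower bound: since $b(\Omega)\cdot b(\Lambda)=0$ we get $\int_{\Omega\times\Lambda}x\cdot y=0$, hence $\int_{W_0}x\cdot y=\int_{\Omega\times\Lambda}(x\cdot y)^+=\tfrac12\int_{\Omega\times\Lambda}|x\cdot y|$ and $c[2\cdot 1_{W_0}]=-\int_{\Omega\times\Lambda}|x\cdot y|$. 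Thus $2\cdot 1_{W_0}$ is optimal, and uniqueness yields $W=W_0$ up to a null set, the threshold set $\{x\cdot y=0\}$ being Lebesgue negligible in $\R^{2d}$.

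The main obstacle is this last step: the two reflection identities do not by themselves determine $W$ (many feasible sets satisfy $W=R(\tilde W)=-R(\tilde W)$), so optimality must be used in an essential way. The clean way to do so is the pointwise comparison above, which in effect says the minimizer makes the sign-optimal (greedy) choice within each orbit $\{(x,y),(x,-y)\}$, together with the elementary fact that a hyperplane through the origin bisects a centrally symmetric body --- the point at which both balancedness hypotheses $\Omega=-\Omega$ and $\Lambda=-\Lambda$ are consumed. A secondary care is to carry the ``modulo null sets'' qualifier consistently, in particular to record that the zero locus of the bilinear form $x\cdot y$ is negligible in $\R^{2d}$.
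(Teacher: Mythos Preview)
Your argument is correct. The derivation of the two symmetry identities $W=R(\tilde W)=-R(\tilde W)$ is essentially the paper's, phrased slightly differently (you invoke $S(x,y)=(-x,y)$ directly, whereas the paper first observes $-W$ is optimal and then applies Lemma~\ref{lem:symmetries} again); either way one uses both balancedness hypotheses and uniqueness.

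Where you diverge from the paper is in the identification step. The paper argues pointwise: at a Lebesgue point $(x_0,y_0)\in W$ with $x_0\cdot y_0<0$ it builds a four-ball feasible perturbation (using $W=-W$ and $W=R(\tilde W)$ simultaneously) that strictly lowers the cost, concluding $W\subset W_0$; then feasibility of $W_0$ forces equality. Your route is global and variational: from $R(W)=\tilde W$ alone you extract $\int_W|x\cdot y|=\tfrac12\int_{\Omega\times\Lambda}|x\cdot y|$, hence the a priori lower bound $c[2\cdot 1_W]\ge-\int_{\Omega\times\Lambda}|x\cdot y|$ for the optimizer, and then check that the explicit competitor $W_0$ is feasible and saturates this bound, so uniqueness finishes. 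This is arguably cleaner --- it avoids the Lebesgue-point perturbation entirely and uses only one of the two symmetries for the identification --- at the cost of a slightly less direct geometric picture of \emph{why} each orbit $\{(x,y),(x,-y)\}$ prefers the sign-positive representative. One minor point worth making explicit (the paper elides it too): $(C1)$ holds because all feasible $h$ are supported in the bounded set $\Omega\times\Lambda$, so $c$ may be truncated outside without affecting the problem.
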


\begin{proof}
Note 
that $\Gamma:=\Gamma(1_\Omega,1_\Lambda)^{\bar h}\neq \emptyset$ since it contains $1_{\Omega \times \Lambda}$.
Thus there exists $\bar h_2 1_W$ minimizing $c$ on $\Gamma$ 
as in \cite{KM11}.
Lemma \ref{lem:symmetries} ensures $\bar h_2 1_{R(\tilde W)}$
also minimizes $c$ on $\Gamma 
$, as do $\bar h_2 1_{-W}$
and hence $\bar h_2 1_{R({-\tilde W})} = \bar h_2 1_{-R(\tilde W)}$.
The uniqueness
established in Corollary~\ref{cor:uniqueness} therefore implies
$W = -W = R(\tilde W) = -R(\tilde W)$, up to sets of measure zero.
For each Lebesgue point
$z_0=(x_0,y_0) \in W$ of full density, this shows $W$ also contains
$(-x_0,-y_0)$
but not $R(z_0)=(x_0,-y_0)$ nor $-R(z_0) = (-x_0,y_0)$.
Therefore choose a Lebesgue point $z_0 =(x_0,y_0) \in W$
of full density with $x_0 \ne 0 \ne y_0$
and $x_0 \cdot y_0 \ne 0$, noting almost all points in $W$ have this form.
For $r>0$ sufficiently small, the ball $Z_1:=Z=B_r(z_0)$ will be disjoint from its reflections
$Z_3=-Z$, $Z_2=R(Z)$, and $Z_4=-R(Z)$, and moreover $(x,y) \in Z$ will imply $x \cdot y$
has the same sign as $x_0\cdot y_0$.  We claim $x_0 \cdot y_0>0$.
Otherwise $\tilde h = -1_{Z_1 \cup Z_3} + 1_{Z_2 \cup Z_4}$ would be
a feasible perturbation, and
$c(x,y) + c(-x,-y) > 0> c(x,-y) + c(-x,y)$
for all $(x,y) \in Z$ shows $\bar h_2 1_W + \tilde h$ lowers the
cost $c$ in  $\Gamma(f,g)^{\bar h_2}$. This contradicts the minimality
of $\bar h_2 1_W$.  Thus, up to sets of measure zero, $W$ must be contained in
$W' := \{ (x,y) \in \Omega \times \Lambda \mid x \cdot y>0\}$.
On the other hand,  the fact that $\Omega$ and $\Lambda$ are balanced makes it
easy to check feasibility of $\bar h_2 1_{W'}$.  Feasibility of $W$
then shows the containment $W \subset W'$ cannot be strict,  apart from
a set of measure zero,
so $W= W'$ as desired.
\end{proof}

As an immediate corollary,  we recover Example 1.1 of \cite{KM11},
displayed in Figure 2.  Note this analytical example (like the numerical
ones preceding) dispels a number of natural conjectures about the optimizing
set by demonstrating that its topology need not be simple and its
boundary need not be smooth.
\begin{figure}[h!]
\label{fig:checkerboard}
\centering{
\includegraphics[width=2.5in]{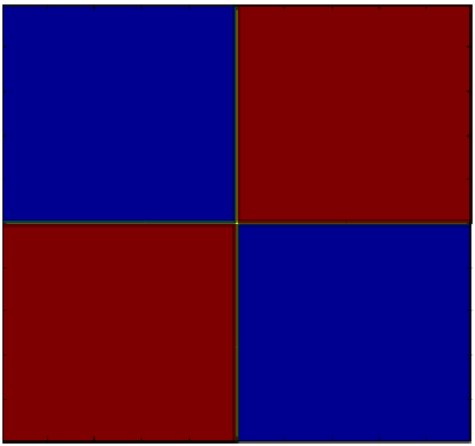}}
\caption{The two-by-two checkerboard solves $\bar h=2$.}
\end{figure}
Symmetry and self-duality gives a much more satisfactory explanation
for its singular nature than the original argument,  which was based
on guessing a solution to the linear program dual to \eqref{eqn: constraind_optimal_cost}.


\begin{cor}[The $2 \times 2$ checkerboard revisited]
\label{cor:checker-board}
Taking $\Omega =  \Lambda = [-\frac{1}{2},\frac{1}{2}]$, the preceding proposition
shows the minimizer $\bar h_2 1_W$ of $c[\,\cdot\,]$ on
$\Gamma^{\bar h_2}(1_\Omega,1_\Omega)$
to be given by $W=[-\frac{1}{2},0]^2 \cup [0,\frac{1}{2}]^2$.
\end{cor}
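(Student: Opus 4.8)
The plan is to apply Proposition~\ref{P:examples} directly with the specific choice $\Omega = \Lambda = [-\frac12,\frac12] \subset \R^1$. First I would verify the hypotheses of that proposition: $\Omega$ is a bounded Lebesgue set of unit volume (length $1$), the densities $f = g = 1_\Omega$ are uniform, the capacity constraint $\bar h_2 = 2\cdot 1_{\Omega\times\Omega}$ is the required self-dual one, the cost is $c(x,y) = -xy$, and most importantly $\Omega$ is balanced: $-[-\frac12,\frac12] = [-\frac12,\frac12]$. All of these are immediate, so the proposition applies and tells us the unique minimizer of $c[\,\cdot\,]$ on $\Gamma^{\bar h_2}(1_\Omega,1_\Omega)$ is $\bar h_2 1_W$ with
\[
W = \{(x,y)\in \Omega\times\Omega \mid xy > 0\}
\]
up to a set of measure zero.

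The only remaining step is to identify this set $W$ explicitly as a subset of the square $[-\frac12,\frac12]^2$. The condition $xy > 0$ means $x$ and $y$ have the same (nonzero) sign, which carves out exactly the open first and third quadrants intersected with the square, i.e. $(0,\frac12]^2 \cup [-\frac12,0)^2$. Since we are only working up to Lebesgue-null sets, this equals $[-\frac12,0]^2 \cup [0,\frac12]^2$, which is the asserted answer. This is the "two-by-two checkerboard": the diagonal pair of subsquares.

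I expect essentially no obstacle here — the corollary is a direct specialization of the proposition, and the set-theoretic identification of $\{xy>0\}$ with the diagonal subsquares is elementary. The one point worth a sentence of care is the "up to sets of measure zero" caveat: the set $\{xy>0\}$ is missing the coordinate axes (where $xy = 0$), so strictly $W \ne [-\frac12,0]^2\cup[0,\frac12]^2$ on the nose, but the difference is the union of two line segments crossed with the square, which has planar Lebesgue measure zero, hence is invisible to the optimization problem (which only sees $h$ almost everywhere). So the statement should be read, as in Proposition~\ref{P:examples}, modulo null sets, and with that understanding the identification is exact.
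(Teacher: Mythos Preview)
Your proposal is correct and is exactly the intended argument: the paper presents this corollary as an immediate specialization of Proposition~\ref{P:examples}, with no separate proof given, and your verification of the hypotheses together with the elementary identification of $\{(x,y)\in\Omega^2 : xy>0\}$ with $[-\frac12,0]^2\cup[0,\frac12]^2$ up to a null set is precisely what is required.
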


\section{Afterword}

When transport capacity between $x$ and $y \in \R^d$ is constrained by a density $\bar h \in L^1(\R^d \times \R^d)$,
the `all or nothing' (a.k.a.\ {\em bang-bang}) characterization of extremal plans 
$h \in \Gamma^{\bar h}(f,g)$ makes optimal transport between $f$ onto $g$ 
appear easier to analyze than the unconstrained problem $\bar h =+\infty$ \cite{AhmadKimMcCann11}.
Nevertheless,  the simple examples with capacity constraints solved above using numerical or theoretical
methods display an unexpectedly rich range of phenomena and raise new questions of their own.   
Although not discussed here, the linear program 
dual to the capacity constrained problem \cite{KM11} turns out to be more complicated to solve than 
that of the unconstrained problem $\bar h=+\infty$ \cite{RachevRueschendorf}  \cite{Villani2}.  We hope to 
analyze this difficulty in future work.





\begin{thebibliography}{55}

\bibitem[AKM11]{AhmadKimMcCann11},
N.~Ahmad, H.K.~Kim, and R.J.~McCann.
Optimal transportation, topology and uniqueness.
\emph{Bull. Math. Sci.} {\bf 1} (2011) 13--32.


\bibitem[B87]{B87}
Y. Brenier. D\'ecomposition polaire et r\'earrangement monotone des champs de vecteurs,
\emph{C. R. Acad. Sci. Paris S\'er. I Math.}, \textbf{305} (1987), 805--808.


\bibitem[CJS08]{CJS08}
G. Carlier, C. Jimenez, and F. Santambrogio. Optimal transportation with traffic congestion and Wardrop equilibria.
\emph{SIAM J. Control Optim.}, \textbf{47} (2008), 1330-1350.


\bibitem[CP89]{CP89}
M.J.P. Cullen and J. Purser. Properties of the Lagrangian semigeostrophic equations.
\emph{J. Atmos. Sci.} \textbf{46} (1989) 2684-2697.




\bibitem[K42]{Kantorovich42}
L. Kantorovich. On the translocation of masses, \emph{C.R. (Doklady) Acad. Sci. URSS (N.S.)}
{\textbf 37} (1942), 199--201.

\bibitem[KM11]{KM11}
J. Korman and R. McCann, Optimal transportation with capacity constraints. To appear in
{\em Trans. Amer. Math. Soc.}






\bibitem[MPW10]{MPW}
R.J.~McCann, B.~Pass, and M.~Warren. Rectifiability of Optimal Transportation Plans,
{\em Canad. J. Math.}  \textbf{64} (2012) 924--934.

\bibitem[MG10]{MG}
R. J. McCann and Nestor Guillen. Five lectures on optimal transportation: geometry, regularity and applications.
In {\em Analysis and Geometry of Metric-Measure Spaces. Lecture Notes of the S\'eminaire Math\'ematiques Sup\'eriuere
(SMS) Montr\'eal 2011,} G.~Dafni et al, eds. (2013) 145-180.


\bibitem[Mo81]{Monge81}
G.~Monge.
\newblock M\'emoire sur la th\'eorie des d\'eblais et de remblais.
\newblock {\em Histoire de l'{A}cad\'emie Royale des Sciences de {P}aris, avec
  les {M}\'emoires de Math\'ematique et de Physique pour la m\^eme ann\'ee},
  pages 666--704, 1781.

\bibitem[RR98]{RachevRueschendorf}
Z. Rachev and L. R\"uschendorf. \emph{Mass Transportation Problems. Vol I Theory, Vol II Applications}.
Springer Verlag, New York, 1998.





\bibitem[V09]{Villani2}
C. Villani. \emph{Optimal Transport, Old and New}, vol. {\bf 334} of Grundlehren der Mathematischen Wissenschaften [Fundamental principles of Mathematical Sciences]. Springer, New York, 2009.

\end{thebibliography}
\end{document}